\title{Stochastic Flips on Dimer Tilings}
\author{Thomas Fernique \and Damien Regnault}
\institute{
LIF, CNRS \& Univ. de Provence\\
39 rue Joliot-Curie 13453 Marseille -- France\\
\email{\{thomas.fernique,damien.regnault\}@ens-lyon.org}
}
\begin{document}

\maketitle

\begin{abstract}
This paper introduces a Markov process inspired by the problem of quasicrystal growth.
It acts over dimer tilings of the triangular grid by randomly performing local transformations, called {\em flips}, which do not increase the number of identical adjacent tiles (this number can be thought as the tiling energy).
Fixed-points of such a process play the role of quasicrystals.
We are here interested in the worst-case expected number of flips to converge towards a fixed-point.
Numerical experiments suggest a $\Theta(n^2)$ bound, where $n$ is the number of tiles of the tiling.
We prove a $O(n^{2.5})$ upper bound and discuss the gap between this bound and the previous one.
We also briefly discuss the average-case.
\end{abstract}

\section*{Introduction}

Tilings are often used as a toy model for quasicrystals, with minimal energy tilings being characterized by local properties called \emph{matching rules}.
In this context, a challenging problem is to provide a theory for quasicrystals growth.
One of the proposed theories relies on a relaxation process (\cite{janot} p. 356): a tiling with many mismatches is progressively corrected by local transformations called \emph{flips}.
Ideally, the tiling eventually satisfies all the matching rules and thus shows a quasicrystalline structure.
It is compatible with experiments, where quasicrystals are obtained from a hot melt by a slow cooling during which flips really occur (Bridgman-Stockbarger method).
It is however unclear whether only flips can explain successful coolings or if other mechanisms should be taken into account.
This is deeply related with the convergence rate of such a flip-correcting process.\\

A cooling process aiming to be physically realist is described in \cite{aperiodic}.
It considers so-called {\em cut and project tilings} of any dimension and codimension, and performs flips which modify by $\Delta E$ the energy of the tiling with a probability proportional to $\exp(-\Delta E/T)$, so that the stationary distribution at fixed temperature $T$ is the Boltzmann one.\\

A simplified cooling process is obtained by performing equipro\-ba\-bly at random only flips whose corresponding $\Delta E$ is above a fixed threshold.
It has been studied on tilings of dimension one and codimension one (two-letter words) in \cite{analco}.
We here focus on tilings of dimension two and codimension one (dimer tilings).\\

The paper is organized as follows.
Sec.~\ref{sec:settings} introduce notations and basic definitions.
We then describe, in Sec.~\ref{sec:convergence}, the cooling process we consider.
We also formally state the convergence time we want to bound, and make conjectures based on numerical experiments.
Sec.~\ref{sec:bound} is then devoted to the proof of an upper bound on the convergence time.
This proof relies on a concentration result for some well-chosen function.
We conclude the paper by a short section discussing prospects of this work, namely the non-tightness of the obtained theoretical bound, and the average convergence time, {\em i.e.}, when the initial tiling is chosen at random (instead of considering the one with the greatest convergence time).

%%%%%%%%%%%%%%%%%%%%%%%%%%%%%%%%%%%%%%
%%%%%%%%%%%%%%%%%%%%%%%%%%%%%%%%%%%%%%
%%%%%%%%%%%%%%%%%%%%%%%%%%%%%%%%%%%%%%
\section{Settings}
\label{sec:settings}

\noindent {\bf Dimer tiling}\\
Let $(\vec{v}_1,\vec{v}_2,\vec{v}_3)$ be the unit vectors of the Euclidean plane of direction $\frac{\pi}{6}+k\frac{2\pi}{3}$, $k=1,2,3$.
They generate the so-called triangular grid.
A {\em dimer} is a lozenge tile made of two adjacent triangles of the grid.
A {\em domain} is a connected subset of the grid.
Then, a {\em dimer tiling} is a tiling  of a domain by dimers (Fig.~\ref{fig:dimer_tiling}, left).\\

\noindent {\bf Lift}\\
Let $(\vec{e}_1,\vec{e}_2,\vec{e}_3)$ be the canonical basis of the Euclidean space.
The {\em lift} of a dimer tiling is its image by a map $\phi$ which is linear over tiles and satisfies $\phi(\vec{x}+\vec{v}_k)=\phi(\vec{x})+\vec{e}_k$ for any two vertices $\vec{x}$ and $\vec{x}+\vec{v}_k$ connected by an edge of the tiling.
This map is uniquely defined up to a translation; we assume that $(0,0,0)$ belongs to the image of the domain boundary.
Tiles are thus mapped onto facets of three-dimensional unit cubes, and dimer tilings are mapped onto stepped surfaces of $\mathbb{R}^3$.
This can be easily seen by shading tiles (Fig.~\ref{fig:dimer_tiling}, center).\\

\noindent {\bf Height}\\
Following \cite{thurston}, we define the {\em height} of a point in a dimer tiling by the distance of its image under $\phi$ to the plane $x+y+z=0$.
We also define the {\em height} of a tile as the height of its center: this yields a third representation of dimer tilings, where tile colors depend on the height (Fig.~\ref{fig:dimer_tiling}, right).\\

\begin{figure}
\includegraphics[width=\textwidth]{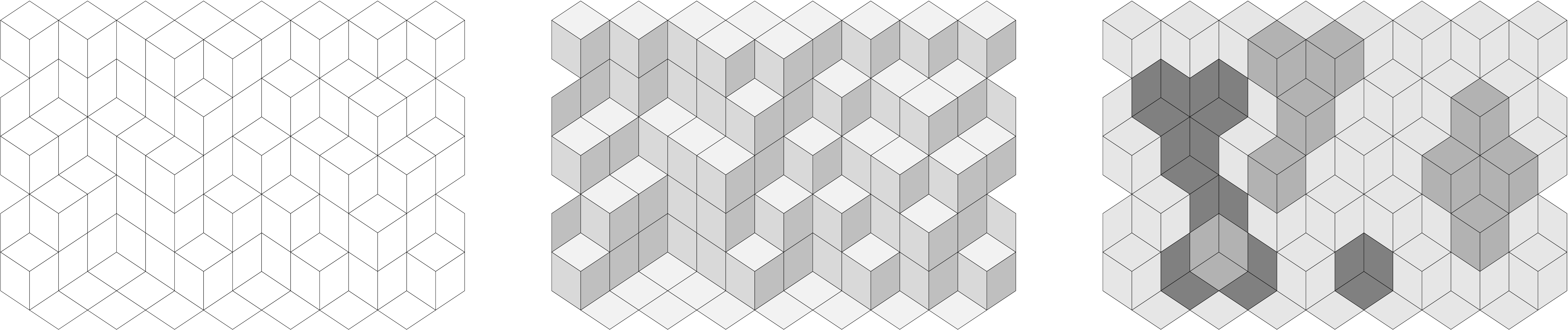}
\caption{Three representations of the same dimer tiling.}
\label{fig:dimer_tiling}
\end{figure}

\noindent {\bf Errors and energy}\\
An {\em error} in a dimer tiling is an edge shared by two identical tiles (up to a translation).
Error-free regions thus perfectly alternate tiles.
The corresponding subsets of the triangular grid can be tiled by six-triangle hexagons, and the lift of an error-free region approximate the plane $x+y+z=0$
In particular, all the tiles have the same height.
The {\em energy} $E(\omega)$ of a dimer tiling $\omega$ is its total number of errors.\\

\noindent {\bf Islands and holes}\\
Suppose that a dimer tiling $\omega$ contains a finite, connected and simply connected set of tiles $\sigma$ whose boundary edges are errors and whose domain can be tiled by six-triangle hexagons.
The number $A(\sigma)$ of such hexagons is called the {\em area} of $\sigma$.
One checks that the height of tiles with an edge on the boundary of $\sigma$ take only two values, say $h(\sigma)$ and $h(\overline{\sigma})$, respectively depending whether the tile is in $\sigma$ or not.
One says that $\sigma$ is an {\em island} if $0<h(\overline{\sigma})<h(\sigma)$, a {\em hole} otherwise.
In both cases, $h(\sigma)$ is called the {\em height} of $\sigma$.
Fig.~\ref{fig:islands} illustrates this.\\

\begin{figure}
\includegraphics[width=\textwidth]{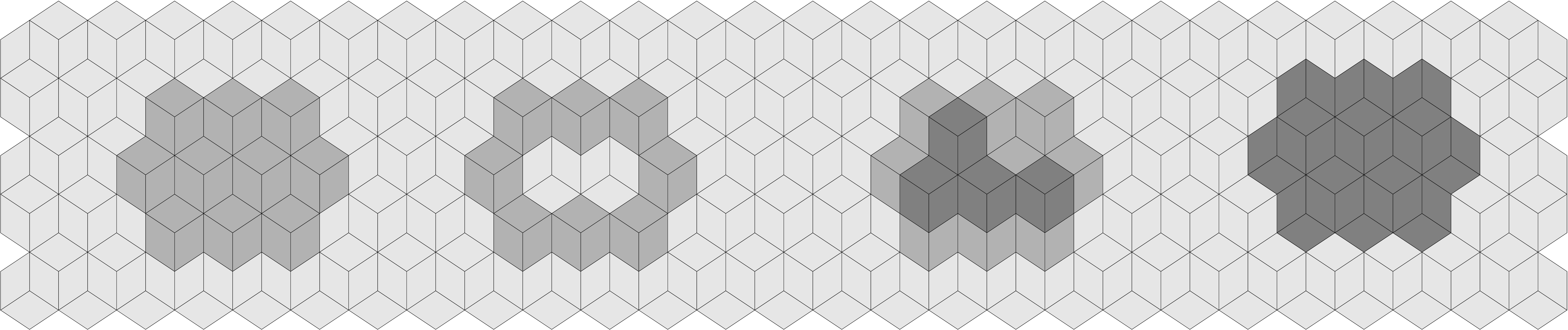}
\caption{
Boundary tiles of this dimer tiling have the same height, say $0$.
From left to right: an island of height $1$ and area $10$; the same island with a hole of height $0$ and area $2$; once again the same island with a superposed island of height $2$ and area $4$; an island of height $-1$ and area $10$ (which looks like a hole because its height is negative).}
\label{fig:islands}
\end{figure}

\noindent {\bf Volume}\\
The {\em volume} $V(\omega)$ of a dimer tiling $\omega$ is defined as the sum of areas of its islands minus the sum of areas of its holes.
Error-free tilings thus have volume zero.
One checks that if the domain is finite, simply connected and admits an error-free tiling, then the errors of any dimer tiling of this domain are the boundaries of its islands and holes.
Hence, tilings of volume zero are error-free.
Otherwise, non-closed paths of errors can run across the domain, with endpoints (if any) on the boundary of the domain.\\

\noindent {\bf Partial order}\\
A dimer tiling $\omega$ is said to be smaller than or equal to a dimer tiling $\omega'$ of the same domain, written $\omega\leq \omega'$, if each point has, in modulus, a height in $\omega$ smaller than or equal to its height in $\omega'$.
The set of tiligs of a domain becomes a distributive lattice, whose extremal elements are the tilings of extremal volume.
%\pagebreak[4]

\noindent {\bf Flip}\\
Whenever a vertex $x$ of a dimer tiling belongs to exactly three tiles, one gets a new dimer tiling by translating each of these tiles along the edges shared by the two other ones (or, equivalently, by rotating these tiles by $\frac{\pi}{3}$ around $x$).
This local rearrangement of tiles is called a {\em flip} in $x$ (Fig.~\ref{fig:flips}).
In the lift, it corresponds to add or remove one cube.
One checks that a flip modifies the volume of a tiling by $\Delta V=\pm 1$ and its energy by $\Delta E\in\{0,\pm 2,\pm 4,\pm 6\}$.
The total number of flips which can be performed on a tiling $\omega$ is denoted by $F(\omega)$.\\

\begin{figure}
\includegraphics[width=\textwidth]{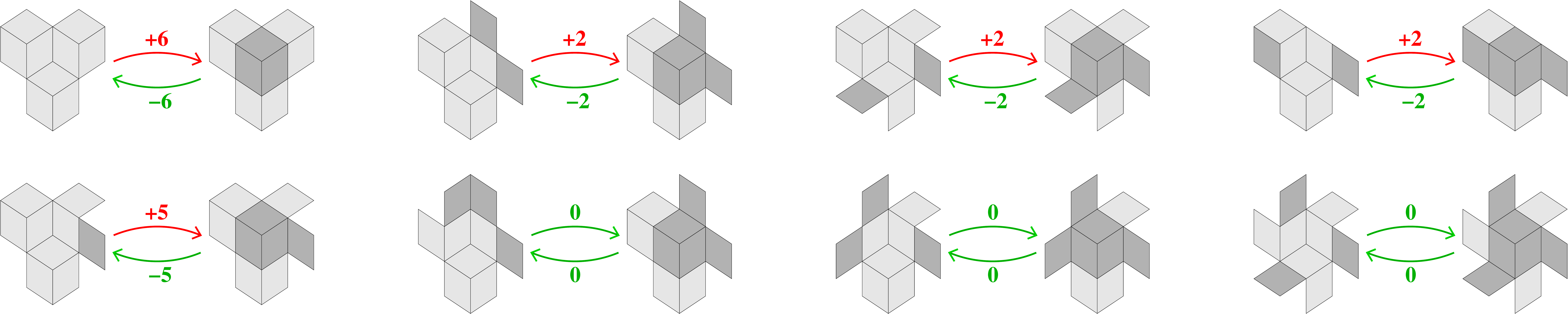}
\caption{A flip and all its possible immediate neighborhoods (up to a rotation), with the according variation of energy of the tiling being reported.
The four leftmost cases correspond to merging of island or creation of holes.
In the cooling process considered in Sec.~\ref{sec:convergence}, only flips which do not increase the energy will be allowed.
}
\label{fig:flips}
\end{figure}

Note that the volume of a tiling turns out to be the minimal number of flips required to transform this tiling into an error-free tiling (if the domain admits such a tiling).\\

\noindent {\bf Flip-accessibility}\\
In \cite{thurston}, Thurston has shown that any two dimer tilings can be connected by performing a sequence of flips, \emph{i.e.} are mutually {\em flip-accessible}, if they tile the same finite and simply connected subset of the triangular grid.
The case of dimer tilings of the whole triangular grid has been considered in \cite{bfr}.
Here, the physical motivations exposed in the introduction lead us to focus on a {\em constrained} flip-accessibility, where only flips which do not increase the number of errors are allowed.
Moreover, we are not interested in a flip-accessibility where flips can be carefully chosen, but in the case where flips are {\em randomly} performed.
The next section makes this more precise.

%%%%%%%%%%%%%%%%%%%%%%%%%%%%%%%%%%%%%%
%%%%%%%%%%%%%%%%%%%%%%%%%%%%%%%%%%%%%%
%%%%%%%%%%%%%%%%%%%%%%%%%%%%%%%%%%%%%%
\section{The cooling process}
\label{sec:convergence}

Let us fix a finite, connected and simply connected subset of the triangular grid which does admit an error-free dimer tiling made of $n$ tiles.
Let $\Omega_n$ denotes the set of all the possible dimer tilings of this subset (flip-accessibility yields that all these tilings have $n$ tiles).\\

The so-called {\em cooling process} we are interested in is the Markov chain defined over $\Omega_n$ as follows.
It starts from $\omega_0\in\Omega_n$ and produces a sequence $(\omega_t)_{t=1,2,\ldots}$ where $\omega_{t+1}$ is obtained by performing on $\omega_t$ a flip, uniformly chosen at random among the flips which do not increase the number of errors\footnote{In other words, the $\Delta E$ threshold discussed in the introduction is equal to zero.}.
If there is no such flip, then the process stops (the tiling is said to be {\em frozen}).\\

Fig.~\ref{fig:average_cooling_snapshots} shows the cooling of a tiling of a hexagonal-shaped domain of $3779$ tiles, chosen uniformly at random among the tilings of this domain.
In this particular case, the process stops in $4290$ steps, with the obtained frozen tiling turning out to be error-free.
How general is such an evolution?

%\begin{figure}
%\centering
%\includegraphics[width=0.9\textwidth]{}
%\caption{Snapshots each $1000$ flips of the cooling of a random tiling (clockwise from top).}
%\label{fig:average_cooling_snapshots}
%\end{figure}

\begin{figure}
\centering
\includegraphics[width=\textwidth]{}
\caption{Snapshots each $1000$ flips of the cooling of a random tiling (from left to right).}
\label{fig:average_cooling_snapshots}
\end{figure}

\noindent The following proposition will give us a first insight into this evolution:

\begin{proposition}\label{prop:possible_flip}
Whenever a tiling in $\Omega_n$ has an error, a flip which do not increase the number of errors can be performed onto.
Moreover, this flip can be chosen so that it decreases the volume of the tiling.
\end{proposition}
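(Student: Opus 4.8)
The plan is to erode, by a single flip, a convex corner of the most extreme region of the tiling, and to control the energy variation through the fact that the energy equals the total boundary length of the islands and holes. First I would reduce to a convenient extreme. Since $\omega$ carries an error it is not error-free, hence $V(\omega)\neq 0$ and $\omega$ has at least one island or hole; among all of them pick $\sigma$ of maximal height in modulus. This $\sigma$ cannot be a hole, since a hole is less extreme than its surroundings, so $\sigma$ is an island, lying one cube above the surrounding level and bounded by errors. Up to the \emph{reflection} $h\mapsto -h$ I may assume that its height $h(\sigma)$ is positive, and thus the global maximum. This reflection is realized by the symmetry of the lift through the plane $x+y+z=0$; it preserves the energy (errors are exactly the same-orientation, i.e. coplanar, adjacencies), maps islands to islands and holes to holes of the same area, hence preserves the volume and sends a volume-decreasing flip to a volume-decreasing flip.

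Next I would produce the flip. As $\sigma$ is a finite region of the hexagon-tiled plane, its boundary turns by a full positive angle and therefore has at least one \emph{convex} corner. Let $x$ be the center of the corner hexagon at such a corner: the three tiles around $x$ are the top faces of a protruding unit cube, so $x$ belongs to exactly three tiles and the flip at $x$ removes this cube. Because it erodes one hexagon of the topmost island, this flip satisfies $\Delta V=-1$, which already yields the ``moreover'' part.

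It then remains to check $\Delta E\le 0$ for this flip. Since the domain is finite and simply connected, the errors are exactly the boundaries of the islands and holes, so $E(\omega)$ is the total boundary length of these regions. The flip replaces $\sigma$ by $\sigma$ minus its corner hexagon $H$ and, being performed at the global top, creates no new island or hole: $H$ merely merges into the surrounding level. Writing $i$ and $o$ for the numbers of edges of $H$ shared respectively with the rest of $\sigma$ and with the outside, the boundary length of $\sigma$ changes by $i-o$; at a convex corner $o>i$, so the perimeter, and with it the energy, does not increase.

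I expect the energy step to be the main obstacle. One must justify rigorously that a single flip corresponds to eroding exactly one corner hexagon, that this creates no new island or hole and does not disturb the boundaries of the other regions (which is where the global maximality of $h(\sigma)$ is used), and that a convex corner, at which $o>i$, always exists on a finite simply connected region. Once these structural points are settled, the sign of $\Delta E$ reduces to the perimeter inequality $\Delta E=i-o\le 0$.
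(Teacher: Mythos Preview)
Your strategy coincides with the paper's: pick an island $\sigma$ of maximal $|h|$, flip away one of its hexagons (so $\Delta V=-1$), and read off $\Delta E$ as the perimeter change $i-o$ of that hexagon (the paper writes this as $-2(k-3)$ with $k=o$). The reflection to positive height is unnecessary but harmless.

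The genuine gap is the assertion ``at a convex corner $o>i$''. A single salient vertex on $\partial\sigma$ only certifies that \emph{two} consecutive edges of the incident hexagon lie on the boundary, i.e.\ $o\ge 2$; it gives neither $o>i$ nor even $o\ge i$. Concretely, in a two-row strip of hexagons every interior hexagon has exactly $o=2$ consecutive external edges and $i=4$ internal ones, yet sits at a salient vertex; your proposed flip there yields $\Delta E=i-o=+2>0$, which is disallowed by the process. The strict inequality $o>i$ is moreover unattainable in general: in the seven-hexagon ``flower'' every boundary hexagon has $o=i=3$. So the correct target is merely $o\ge 3$, which suffices for $\Delta E\le 0$, and this is precisely what needs an argument.

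The paper supplies exactly the missing refinement you flag as ``the main obstacle'': following the closed boundary, each turn is $\pm\pi/3$ and there are six more salient turns than reflex ones, so salient turns cannot alternate with reflex ones and two of them must occur \emph{consecutively}. The three edges bounding two consecutive salient turns all lie on a single hexagon, forcing $o\ge 3$ there. Choosing that hexagon, your perimeter computation $\Delta E=i-o\le 0$ goes through and the proof closes.
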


\begin{proof}
The existence of a (finite) error-free tiling ensures that errors form the boundary of islands.
Consider an island whose height is, in modulus, maximal.
Such an island is made of hexagons tiled by three-tiles.
Performing a flip on such a hexagon decreases the area of the island (hence the volume of the tiling) and decreases the number of errors by $2(k-3)$, where $k$ is the number of error edges of this hexagon.
It thus suffices to find a hexagon with at least three error edges.
Let us follow clockwise the boundary of the island: the direction between two consecutive edges changes by $\pm\frac{\pi}{6}$, and there is more negative variations because the boundary is closed.
We can thus find two consecutive negative variations: the three corresponding edges are errors on the boundary of the wanted hexagon.
\qed\end{proof}

This proposition ensures that a frozen tiling is necessarily error-free.
Moreover, it shows that the process almost surely stops in a finite time, since the volume of tilings in $\Omega_n$ is uniformly bounded and decreases with probability at least $\frac{1}{n}$ at each step.\\

In order to describe more precisely this cooling process, we are interested in the probability distribution of the random variable $T$, called the {\em convergence time}, which counts the number of steps before it stops.
Here, we focus only on the \emph{worst expected convergence time} $\widehat{T}$, defined by
$$
\widehat{T}(n):=\max_{\omega\in \Omega_n} \mathbb{E}(T~|~\omega_0=\omega),
$$
\noindent The fact that the cooling process almost surely stops in a finite time now reads
$$
\widehat{T}(n)<\infty.
$$
Conversely, since the volume of a tiling decreases at most by one at each step, the worst expected convergence time is bounded below by the maximal volume of tilings in $\Omega_n$.
With the hexagonal-shaped chosen domain, one checks that it yields the lower bound\footnote{$O$, $\Omega$ and $\Theta$ are the usual Bachmann-Landau notations.}
$$
\widehat{T}(n)=\Omega(n\sqrt{n}).
$$
Our goal is to obtain a theoretical tight bound for the worst expected convergence time.
Numerical experiments suggest that it is quadratic and correspond to tilings of maximal volume (Fig.~\ref{fig:simulations}).
We thus conjecture:
$$
\widehat{T}(n)=\Theta(n^2).
$$

\begin{figure}
\includegraphics[width=0.48\textwidth]{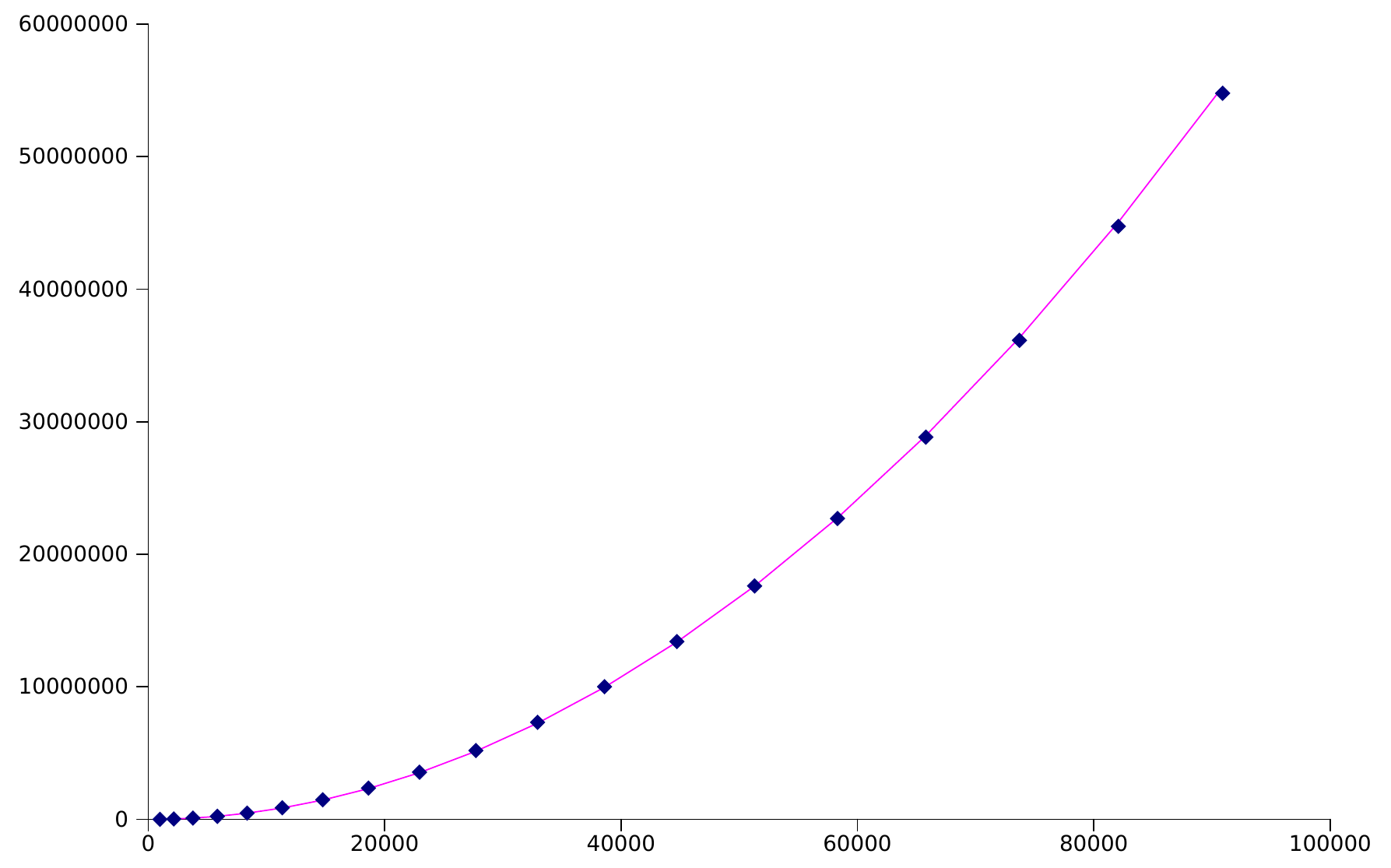}
\hfill
\includegraphics[width=0.48\textwidth]{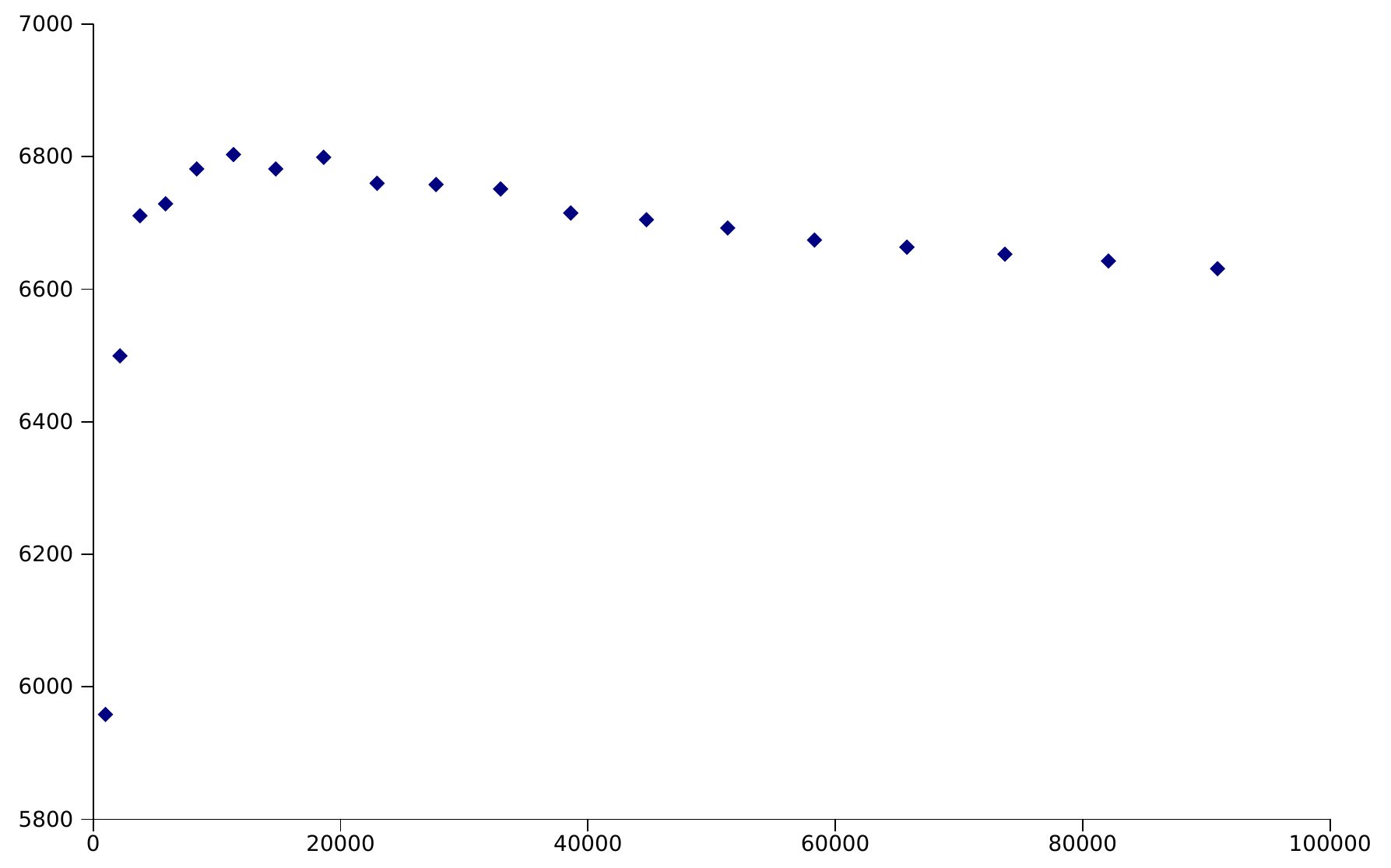}
\caption{Left, markers show averages of the convergence time on numerical experiments, starting from the maximal volume tiling (thought to have the worst convergence time).
We also drawn the curve $cn^2$, $c=6.689.10^{-3}$, which matches rather well the markers.
Right, dividing these experimental values by $n^2$ does not show a subpolynomial factor.
We thus conjecture $\widehat{T}(n)=\Theta(n^2)$.
}
\label{fig:simulations}
\end{figure}

%%%%%%%%%%%%%%%%%%%%%%%%%%%%%%%%%%%%%%
%%%%%%%%%%%%%%%%%%%%%%%%%%%%%%%%%%%%%%
%%%%%%%%%%%%%%%%%%%%%%%%%%%%%%%%%%%%%%
\section{A theoretical upper bound}
\label{sec:bound}

To obtain an upper theoretical bound on the expected convergence time, we will rely on the following probabilistic tool (proven, {\em e.g.}, in \cite{fmst}):

\begin{proposition}\label{prop:probabilistic_tool1}
Let $(x_t)_{t\geq 0}$ be a Markov chain over a space $\Omega$.
Assume that there is $\varepsilon>0$ and a map $\phi:\Omega\to[a,b]\subset(0,\infty)$ such that, whenever $\phi(x_t)>a$:
$$
\mathbb{E}[\phi(x_{t+1})-\phi(x_t)|x_t]\leq -\varepsilon.
$$
Then, the expected value of the random variable $T:=\min\{t~|~\phi(x_t)= a\}$ satisfies
$$
\mathbb{E}(T)\leq \frac{b-a}{\varepsilon}.
$$
\end{proposition}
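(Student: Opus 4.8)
The plan is to convert the one-step drift hypothesis into a supermartingale and then read off the bound by monotone convergence. Concretely, I would introduce the auxiliary process
$$
Z_t := \phi(x_{t\wedge T}) + \varepsilon\,(t\wedge T),
\qquad t\wedge T:=\min(t,T),
$$
and aim to show that $(Z_t)_{t\ge 0}$ is a supermartingale with respect to the natural filtration $(\mathcal{F}_t)$ of the chain.

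First I would check the supermartingale inequality by splitting on whether the chain has already stopped, both events being $\mathcal{F}_t$-measurable. On $\{T\le t\}$ the increment $Z_{t+1}-Z_t$ vanishes, since $t\wedge T$ and $(t+1)\wedge T$ both equal $T$. On $\{t<T\}$ one has $\phi(x_t)>a$, because $\phi$ takes values in $[a,b]$ and, $T$ being the first hitting time of the level $a$, the value $a$ has not yet been reached; the drift hypothesis therefore applies and, using the Markov property to replace conditioning on $\mathcal{F}_t$ by conditioning on $x_t$, gives
$$
\mathbb{E}[\phi(x_{t+1})-\phi(x_t)\mid x_t]\le-\varepsilon.
$$
Adding the deterministic increment $\varepsilon$ coming from the second term yields $\mathbb{E}[Z_{t+1}-Z_t\mid\mathcal{F}_t]\le 0$ on this event, hence overall.

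Next I would extract the bound. Iterating the supermartingale property gives $\mathbb{E}[Z_t]\le\mathbb{E}[Z_0]=\phi(x_0)\le b$ for every $t$. Since $\phi\ge a$ pointwise, we have $Z_t\ge a+\varepsilon\,(t\wedge T)$, so
$$
a+\varepsilon\,\mathbb{E}[t\wedge T]\le\mathbb{E}[Z_t]\le b,
$$
whence $\mathbb{E}[t\wedge T]\le(b-a)/\varepsilon$ uniformly in $t$. Letting $t\to\infty$, the variables $t\wedge T$ increase to $T$, so monotone convergence delivers $\mathbb{E}[T]\le(b-a)/\varepsilon$, which is the claim; in particular $T$ is almost surely finite.

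The only delicate point is the bookkeeping at the stopping time: one must stop the process at $T$, working with $Z_t$ rather than with $\phi(x_t)+\varepsilon t$ directly, so that the drift hypothesis, assumed only while $\phi(x_t)>a$, is never invoked after the level $a$ has been reached. The remainder is routine; no integrability obstruction arises, since $\phi$ is bounded and the final passage to the limit is handled by monotone convergence rather than by any uniform-integrability argument.
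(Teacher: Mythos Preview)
Your argument is correct and is the standard supermartingale formulation of the negative-drift hitting-time bound: stop the process at $T$, compensate the drift by adding $\varepsilon(t\wedge T)$, verify the supermartingale property on the two events $\{T\le t\}$ and $\{T>t\}$, and pass to the limit by monotone convergence. All steps are sound; in particular the integrability of $Z_t$ is immediate from the boundedness of $\phi$ and the trivial bound $t\wedge T\le t$.

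There is nothing to compare against, however: the paper does not give its own proof of this proposition. It is stated as a ready-made probabilistic tool, with a reference to \cite{fmst} for a proof. Your write-up is precisely the kind of argument one finds in such references and would make the paper self-contained on this point.
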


\noindent We first need to introduce the notion of {\em triconvexity} (see Fig.~\ref{fig:triconvexity}):

\begin{definition}
A dimer tiling is said to be {\em triconvex} if any segment of slope $0\mod \frac{2\pi}{3}$ which connects two vertices $x$ and $y$ contains only vertices whose height is between the heights of $x$ and $y$.
The {\em triconvex hull} $\overline{\omega}$ of a dimer tiling $\omega$ is the smallest triconvex dimer tiling greater than or equal to $\omega$.
\end{definition}

\begin{figure}
\includegraphics[width=\textwidth]{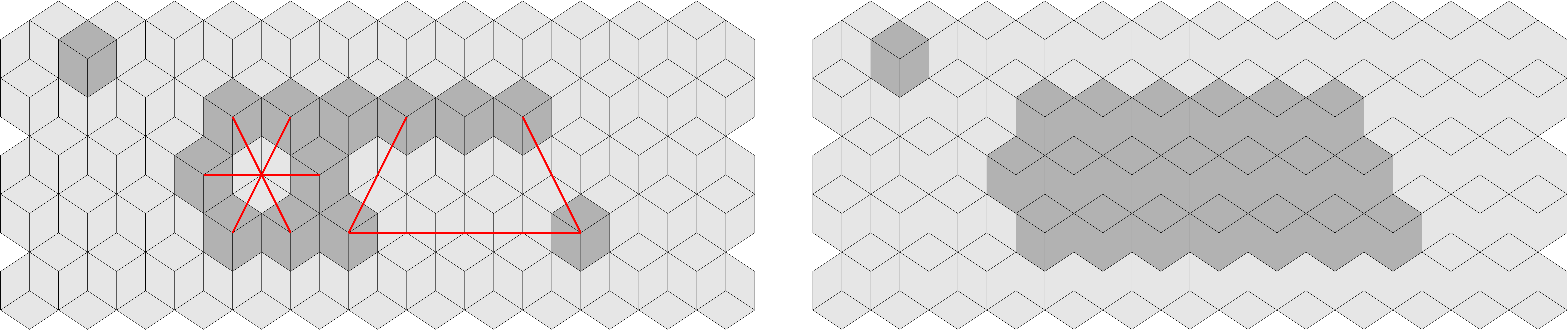}
\caption{A non-triconvex tiling (left, with fault segments) and its triconvex hull (right).}
\label{fig:triconvexity}
\end{figure}

Let us then associate with any map $X$ defined over dimer tilings (for example, the volume $V$ or the energy $E$) a map $\overline{X}$ defined as follows:
$$
\overline{X}(\omega):=X(\overline{\omega}).
$$
This will help us to avoid ambiguities, especially when considering variations.
Indeed, let us stress that $\Delta \overline{X}(\omega)$ and $\Delta X(\overline{\omega})$ are generally different, since
\begin{itemize}
\item $\Delta \overline{X}(\omega)=X(\overline{\omega_a})-X(\overline{\omega})$, where $\omega_a$ is obtained by performing a flip on $\omega$;
\item $\Delta X(\overline{\omega})=X(\omega_b)-X(\overline{\omega})$, where $\omega_b$ is obtained by performing a flip on $\overline{\omega}$.
\end{itemize}

\noindent Now, in order to use Prop.~\ref{prop:probabilistic_tool1}, let us define a map $\phi$ over dimer tilings by:
$$
\phi:=4V+E
$$
We first consider the most simple case:

\begin{lemma}\label{lem:une_ile_convexe}
If $\omega$ is a triconvex dimer tiling with only one island, then
$$
\mathbb{E}[\Delta \phi(\omega)]\leq-\frac{12}{F(\omega)}.
$$
\end{lemma}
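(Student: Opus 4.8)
The plan is to reduce the statement to a combinatorial inequality on the flips available on $\omega$ and then to exploit the convexity forced by triconvexity. First I would record the effect of a single flip on $\phi=4V+E$. A flip toggles the three tiles around a vertex, hence adds or removes one cube, so $\Delta V=\pm 1$; and if $k$ denotes the number of errors among the six edges bordering the flipped hexagon \emph{before} the flip, rotating the three inner tiles by $\frac{\pi}{3}$ complements this local error pattern, turning those $k$ errors into $6-k$, so $\Delta E=6-2k$. Thus
$$
\Delta\phi=4\,\Delta V+6-2k,
$$
and a flip does not increase the energy exactly when $k\geq 3$. Down-flips (removing a cube, $\Delta V=-1$) then satisfy $\Delta\phi=2-2k\leq-4$, while up-flips (adding a cube, $\Delta V=+1$) satisfy $\Delta\phi=10-2k\leq 4$, so an up-flip may actually \emph{raise} $\phi$.

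Next I would pass from the process to a sum. Since the cooling step chooses uniformly among the $N$ non-increasing flips, $\mathbb{E}[\Delta\phi(\omega)]$ is the average of $\Delta\phi_f$ over those flips. As $N\leq F(\omega)$, it suffices to prove $\sum_f\Delta\phi_f\leq-12$, for then $\mathbb{E}[\Delta\phi]\leq-12/N\leq-12/F$. The only exception is the island reduced to a single cube, where the cube-removal is the unique non-increasing flip, $\mathbb{E}[\Delta\phi]=-10$, and $-10\leq-12/F$ still holds because the surrounding error-free region forces $F\geq 2$; this degenerate case is exactly what explains the factor $F$ rather than a constant in the statement.

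The structural heart is to bound $\sum_f\Delta\phi_f$ using triconvexity. Splitting into the numbers $D$ and $U$ of non-increasing down- and up-flips, the formula above gives $\sum_f\Delta\phi_f=\sum_{\text{down}}(2-2k)+\sum_{\text{up}}(10-2k)$, so I must show that the convex corners of the island dominate its concave step-corners. I would argue exactly as in Prop.~\ref{prop:possible_flip}: following the island boundary the direction changes by $\pm\frac{\pi}{6}$ with strictly more negative than positive variations, so two consecutive negative variations always occur and produce a convex-corner hexagon with $k\geq 3$, i.e. a down-flip with $\Delta\phi\leq-4$ that is more negative the sharper the corner is. Triconvexity (convex level sets) confines the up-flips to the inner corners of the steps and lets one charge each one, contributing at most $+4$, against the strictly sharper convex corners lying above it.

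The main obstacle is precisely this last bookkeeping. One cannot simply discard the up-flips, since an energy-neutral up-flip ($k=3$) contributes $+4$ and an energy-decreasing one ($k=4$) contributes $+2$; the argument must show that the large values of $k$ at the convex corners above a step always compensate. A clean way to organize this is to track, level by level, how the error edges distribute among convex and concave corners: the excess of negative over positive turning on each level boundary caps the concavity available to up-flips in terms of the convexity above it, and a short induction on the height of the island then pushes the sum below $-12$ as soon as the area exceeds one. I would close by checking the few smallest shapes (the single cube, the domino, and the first low pyramids) directly, the single cube being the genuine boundary case handled in the previous paragraph.
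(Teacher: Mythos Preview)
Your reduction is right and is exactly the paper's starting point: the formula $\Delta\phi=4\,\Delta V+6-2k$, the observation that it suffices to show $\sum_f\Delta\phi_f\leq-12$ over the non-increasing flips, and the remark that $N\leq F(\omega)$ absorbs the single-cube boundary case. The gap is that you never actually do the bookkeeping. You say yourself that ``the main obstacle is precisely this last bookkeeping'' and then propose an induction on the height of the island; but a single island has a \emph{single} height, so there are no levels to induct on---the height stratification belongs to the later multi-island lemmas, not here. Your charging of each up-flip ``against the strictly sharper convex corners lying above it'' is a picture, not an argument: nothing in your text bounds the number of up-flips or their total positive contribution, and the few small cases you offer to check by hand do not substitute for the general step.

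The device you are missing, and which the paper uses, is to account \emph{per boundary angle} rather than per flip. Walk the island boundary clockwise and call each turn salient or reflex. A flip whose $k\geq 3$ error edges on its hexagon are consecutive sits at $i=k-1\geq 2$ consecutive angles of one type: if salient then $\Delta V=-1$ and $\Delta\phi=-2i$, i.e.\ exactly $-2$ per salient angle involved; if reflex then $\Delta V=+1$ and $\Delta\phi=8-2i$, i.e.\ at most $+2$ per reflex angle involved. Summing gives $\sum_f\Delta\phi_f\leq -2(\#\text{salient}-\#\text{reflex})$, and a simple closed boundary on the triangular grid has exactly six more salient than reflex angles (an easy induction on the boundary length, or discrete Gauss--Bonnet), which yields $-12$ on the nose. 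The remaining case---a flip whose error edges are \emph{not} consecutive, so that two of them are parallel---is where triconvexity is actually used: it forces that vertex to lie in the island, hence the flip is a down-flip (it disconnects the island) with $\Delta\phi\leq-4$, which can only improve the bound. Your sketch invokes triconvexity only vaguely (``confines the up-flips to the inner corners of the steps''); this is the concrete place where it enters.
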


\begin{proof}
Follow clockwise the boundary of the island: two consecutive edges make either a {\em salient} or a {\em reflex} angle, depending on whether we turn leftwards or rightwards (see Fig.~\ref{fig:une_ile_convexe}).
If there is a flip around some vertex $x$, then the six-triangle hexagon of center $x$ has at least $3$ error edges on its boundary.
If these error edges are consecutive, then they form $i\geq 2$ consecutive similar angles, and performing the flip yields
\begin{itemize}
\item if the $i$ angles are salient, then $\Delta V=-1$ and $\Delta E=4-2i$, that is, $\Delta \phi=-2i$.
This yields $\Delta\phi=-2$ per salient angle.
\item if the $i$ angles are reflex, then $\Delta V=1$ and $\Delta E=4-2i$, that is, \mbox{$\Delta \phi=8-2i$}.
This yields $\Delta\phi\leq +2$ per reflex angle.
\end{itemize}
If there are only such flips, then the claimed bound follows by checking by induction on its length that the boundary of an island (triconvex or not) has always six more salient than reflex angles.
Otherwise, consider a flip in $x$ such that the error edges on the boundary of the six-triangle hexagon of center $x$ are non consecutive.
At least two of these edges must be parallel.
Thus, by triconvexity, the vertex $x$ is in $\omega$.
Hence, such a flip decreases by $1$ the volume (and disconnects the island).
In particular, it decreases $\phi$ by at least $4$, so that it can only improve the bound.
\qed\end{proof}

\begin{figure}
\centering
\includegraphics[scale=0.2]{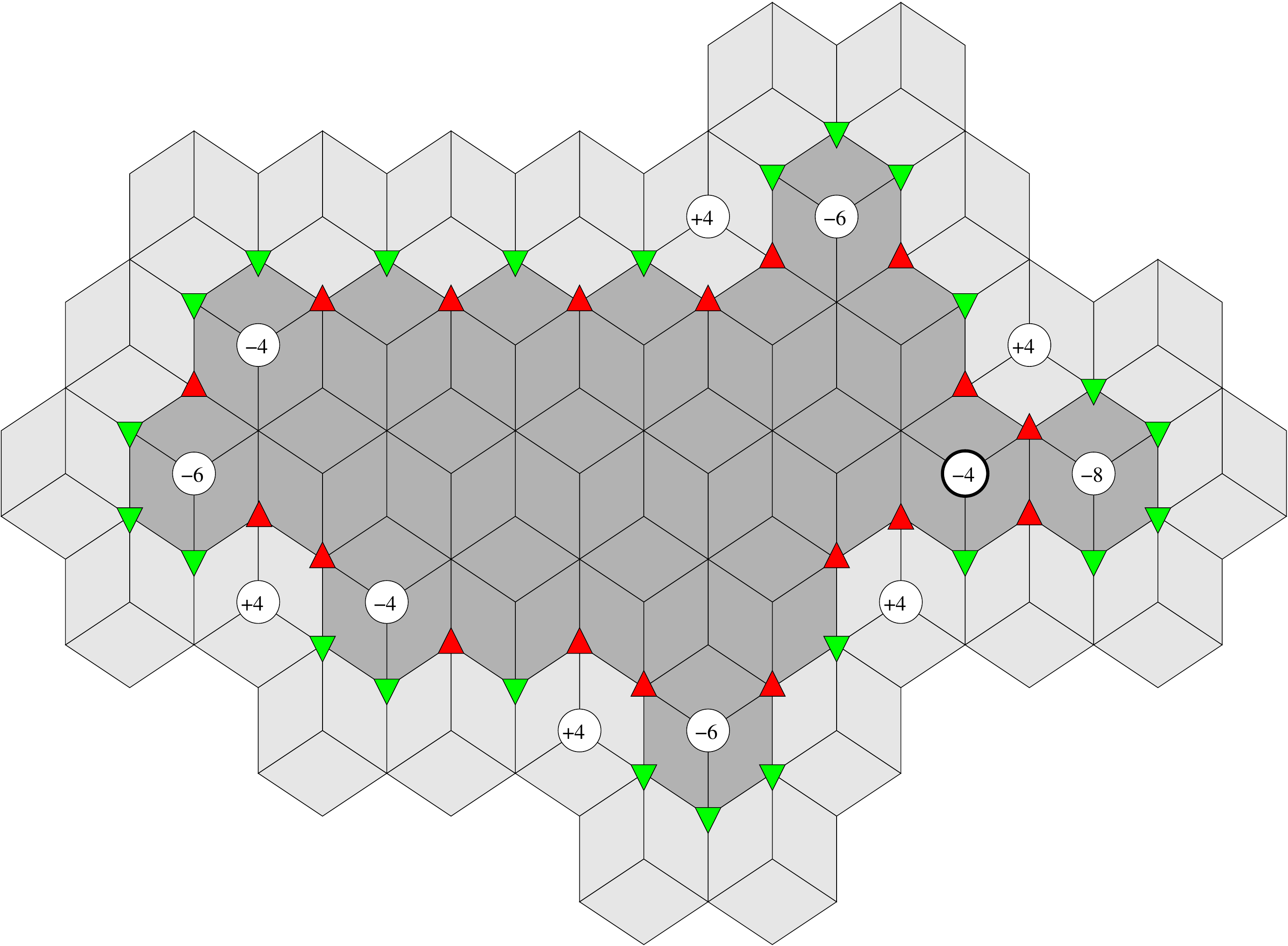}
\caption{A single triconvex island with $24$ salient angles (downwards triangles) and $18$ reflex angles (upwards triangles).
A number in a circle shows the variation of $\phi$ if a flip is performed around this circle.
There are $F(\omega)=12$ performeable flips, and one computes $\mathbb{E}(\Delta\phi)=-\frac{3}{2}\leq-\frac{12}{F(\omega)}$.
The bound is not tight because there is one flip (in the thick circle) which disconnects the island.
}
\label{fig:une_ile_convexe}
\end{figure}

Since islands are not always triconvex, we need the following technical lemma, which relies the variations $\Delta \overline{\phi}(\omega)$ and $\Delta \phi(\overline{\omega})$ (see Fig.~\ref{fig:une_ile}):

\begin{lemma}\label{lem:une_ile}
If $\omega$ is a dimer tiling with only one island, then
$$
F(\omega)\mathbb{E}[\Delta \overline{\phi}(\omega)]\leq F(\overline{\omega})\mathbb{E}[\Delta \phi(\overline{\omega})]
$$
\end{lemma}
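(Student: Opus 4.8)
The plan is to unfold the two expectations and reduce the claim to a comparison between two sums taken over performable flips. Since each flip is chosen uniformly, $F(\omega)\,\mathbb{E}[\Delta\overline{\phi}(\omega)]=\sum_{f}\big(\phi(\overline{\omega_a})-\phi(\overline{\omega})\big)$, where $f$ ranges over the $F(\omega)$ flips on $\omega$ and $\omega_a=f(\omega)$; likewise $F(\overline{\omega})\,\mathbb{E}[\Delta\phi(\overline{\omega})]=\sum_{g}\big(\phi(\omega_b)-\phi(\overline{\omega})\big)$, where $g$ ranges over the flips on $\overline{\omega}$ and $\omega_b=g(\overline{\omega})$. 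The stated inequality is therefore equivalent to $\sum_{f}\big(\phi(\overline{\omega_a})-\phi(\overline{\omega})\big)\le\sum_{g}\big(\phi(\omega_b)-\phi(\overline{\omega})\big)$, i.e. a comparison of the total $\phi$-variation seen through the hull against the total variation read directly on the hull.

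Next I would establish a locality property of the triconvex hull. A flip $f$ on $\omega$ contributes a nonzero summand only when it changes the hull, and I claim this forces $f$ to be performed at a vertex $x$ where $\omega$ and $\overline{\omega}$ already coincide. Indeed, a flip performed strictly below the hull surface (inside a filled concavity) leaves $\omega_a$ sandwiched between $\omega$ and $\overline{\omega}$, so that $\overline{\omega_a}=\overline{\omega}$; only flips touching the common boundary can move the extreme tiles that determine the hull. At such a shared vertex the same local configuration is present in $\overline{\omega}$, so the flip is performable there as well, and one gets $\overline{\omega_a}=\overline{f(\overline{\omega})}$. This yields an injection $\iota$ from the hull-changing flips of $\omega$ into the flips of $\overline{\omega}$, obtained by keeping the same center $x$.

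I would then compare the two sides term by term along $\iota$ and bound the unmatched flips. For a matched pair, the identity $\overline{\omega_a}=\overline{f(\overline{\omega})}$ together with the salient/reflex bookkeeping of Lemma~\ref{lem:une_ile_convexe} should give $\phi(\overline{\omega_a})-\phi(\overline{\omega})\le\phi(\omega_b)-\phi(\overline{\omega})$, any extra refilling on the $\omega$-side only lowering $\phi$. It then remains to check that every flip on $\overline{\omega}$ left outside the image of $\iota$ has nonnegative $\phi$-variation, so that it can only strengthen the inequality: a salient flip sits at an extreme tile, which already belongs to $\omega$ and hence corresponds to a hull-changing flip on $\omega$; and a reflex flip on the triconvex tiling $\overline{\omega}$ can lower $\phi$ only at a full concave hexagon, that is, a local height pit, which triconvexity forbids since along a segment of slope $0\bmod\frac{2\pi}{3}$ through the pit an interior vertex would be lower than both endpoints. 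Summing the termwise inequalities and the nonnegative unmatched contributions gives the claim.

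The main obstacle is this final bookkeeping: making the locality identity $\overline{\omega_a}=\overline{f(\overline{\omega})}$ fully rigorous, and carrying out the case analysis (salient, reflex, and the disconnecting flips already met in Lemma~\ref{lem:une_ile_convexe}) so that matched pairs satisfy the pointwise bound while all unmatched flips of $\overline{\omega}$ are $\phi$-nondecreasing. Triconvexity is precisely what excludes the dangerous deep concavities, so the delicate point is to verify that no $\phi$-decreasing flip of $\overline{\omega}$ can be hidden in a region where $\omega$ and $\overline{\omega}$ disagree.
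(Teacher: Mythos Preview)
Your overall architecture is exactly that of the paper: unfold both expectations as sums over flip centers, then split according to whether a center lies in $F(\omega)\setminus F(\overline{\omega})$, in $F(\overline{\omega})\setminus F(\omega)$, or in the intersection. Your treatment of the first two groups is essentially correct and matches the paper: flips in $F(\omega)\setminus F(\overline{\omega})$ do not move the hull, and flips in $F(\overline{\omega})\setminus F(\omega)$ are shown (via triconvexity) to have $\Delta V=1$ and $\Delta E=0$, hence strictly positive $\Delta\phi$.

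The gap is in the matched-pair case, and it is more than bookkeeping. The ``locality identity'' $\overline{\omega_a}=\overline{f(\overline{\omega})}$ that you want to rely on is not true in general: when the flip at $x$ has $\Delta E_x(\omega)<0$ (so $x$ is surrounded by $i\ge 4$ error edges), removing the cube at $x$ in $\omega$ can collapse the triconvex hull by far more than one cube, whereas $f(\overline{\omega})$ loses exactly one cube. Even when the identity does hold, it compares $\overline{\omega_a}$ with $\overline{f(\overline{\omega})}$, while the right-hand side involves $\phi(f(\overline{\omega}))$ without a further hull; so the identity is not the right tool. The paper avoids this by a direct two-case computation on $\Delta E_x(\omega)$: if $\Delta E_x(\omega)=0$, the three new errors are parallel to the three old ones, so the hull changes only at $x$ and the two contributions are equal; if $\Delta E_x(\omega)<0$, triconvexity forces $x$ to lie inside the island, one computes $\Delta\phi_x(\overline{\omega})=-2i$, and either $x$ remains in the hull (so $\Delta\overline{\phi}_x(\omega)=0$, which is \emph{larger} than $-2i$, the wrong direction for your ``extra refilling only lowers $\phi$'' heuristic) or the hull collapses and one only gets $\Delta\overline{\phi}_x(\omega)\le -2i$. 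Sorting out which sub-case gives which inequality is precisely what your sketch leaves open, and your intuition that the $\omega$-side can only be smaller is not uniformly correct.
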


\begin{proof}
Let $\Delta X_y$ denotes the $X$-variation by performing a flip around $y$.
Proving the lemma is then equivalent to prove
$$
\sum_{x\in F(\omega)}\Delta\overline{\phi}_x(\omega)\leq\sum_{x\in F(\overline{\omega})}\Delta \phi_x(\overline{\omega}).
$$
This easy follows from these three sublemmas:\\
$\bullet$ If $x\in F(\omega)\backslash F(\overline{\omega})$, then $\Delta\overline{\phi}_x(\omega)=0$ since the flip does not modify $\overline{\omega}$.\\
$\bullet$ If $x\in F(\overline{\omega})\backslash F(\omega)$, then $\Delta \phi_x(\overline{\omega})>0$.\\
We first prove that $\Delta V_x(\overline{\omega})=1$.
This is clear if $x$ is not in the island of $\overline{\omega}$.
Other\-wise, that is, if $x$ is in the island of $\overline{\omega}$, then it is on a segment whose endpoints are in the island of $\omega$ and whose height are less or equal to the one of $x$.
By performing the flip in $x$, these endpoints are unchanged.
Hence, the triconvexity of $\overline{\omega}$ ensures that the height of $x$ in $\overline{\omega}$ has increased, that is, $\Delta V_x(\overline{\omega})=1$.\\
We then prove that $\Delta E_x(\overline{\omega})=0$.
If $\Delta E_x(\overline{\omega})<0$, then $x$ is surrounded by at least $4$ error edges, among which at least two are parallel.
If $x$ is not in the island of $\overline{\omega}$, this would contradict the triconvexity of $\overline{\omega}$.
If $x$ is in the island of $\overline{\omega}$, thus in the island of $\omega$, then $x$ would be in $F(\omega)$, what contradicts our initial assumption.
Hence, $\Delta E_x(\overline{\omega})=0$ (since the cooling never increases the energy).\\
$\bullet$ If $x\in F(\omega)\cap F(\overline{\omega})$, then $\Delta\overline{\phi}_x(\omega)\leq\Delta \phi_x(\overline{\omega})$.
Indeed, the only problem is that performing this flip could change much more the triconvex hull of $\omega$ than $\omega$ itself.\\
Assume, first, that $\Delta E_x(\omega)=0$.
In this case, $x$ is surrounded by exactly three consecutive error edges (on the boundary of the six-triangle hexagon modified by the flip).
By performing the flip, we change these error edges by the three other ones.
In particular, to each new error corresponds a parallel old error, so that the the triconvex hull after and before the flip differ only in $x$.
Thus, $\Delta\overline{\phi}_x(\omega)=\Delta \phi_x(\overline{\omega})$.\\
Assume, now, that $\Delta E_x(\omega)<0$.
In this case, $x$ is surrounded by $i\geq 4$ error edges.
Since two of these edges are parallel, $x$ is in the island of $\omega$ (otherwise it would not be in $F(\overline{\omega})$).
One thus has $\Delta\phi_x(\overline{\omega})=-4-2(i-2)=-2i$.
If $x$ is still in the island of $\overline{\omega}$, then $\Delta \overline{\phi}_x(\omega)=0$.
Otherwise, once the flip performed, the new convex hull of the island has lost at least $x$, but maybe much more, so that we can just ensures that $\Delta \overline{\phi}_x(\omega)\leq -2i$.
This shows $\Delta\overline{\phi}_x(\omega)\leq\Delta \phi_x(\overline{\omega})$.
\qed\end{proof}

\begin{figure}
\centering
\includegraphics[scale=0.2]{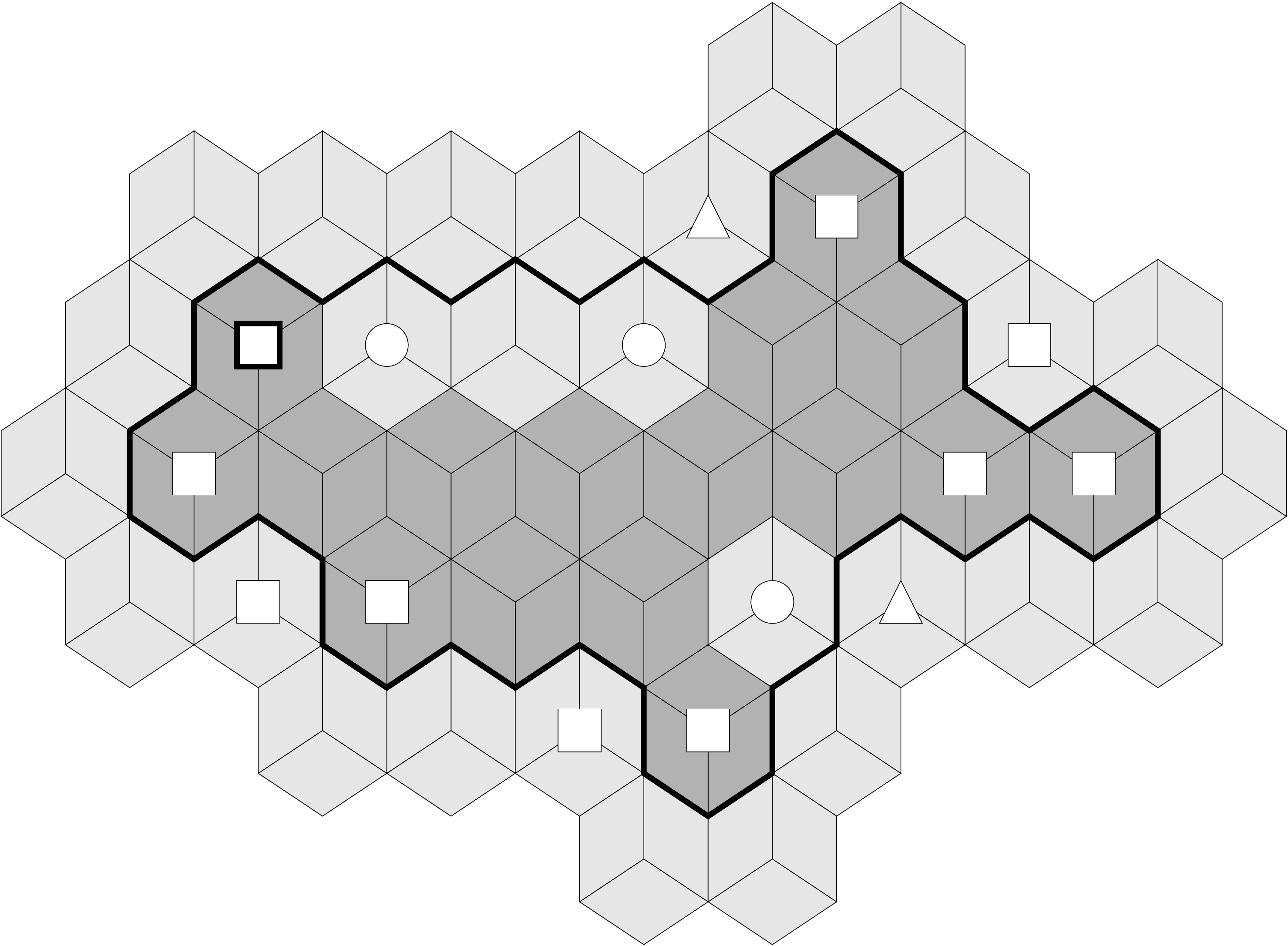}
\caption{An island of a tiling $\omega$.
Its triconvex hull (thick boundary) corresponds to Fig.~\ref{fig:une_ile_convexe}.
Flips are indicated by circles, triangles or squares, respectively depending whether they are in $F(\omega)\backslash F(\overline{\omega})$, $F(\overline{\omega})\backslash F(\omega)$ or $F(\omega)\cap F(\overline{\omega})$.
Lemmap~\ref{lem:une_ile} relies $\Delta \overline{\phi}(\omega)$ and $\Delta \phi(\overline{\omega})$.
}
\label{fig:une_ile}
\end{figure}

\noindent In particular, this result combined with Lem.~\ref{lem:une_ile_convexe} applied to $\overline{\omega}$ yield:
$$
\mathbb{E}[\Delta \overline{\phi}(\omega)]\leq
\mathbb{E}[\Delta \phi(\overline{\omega})]\leq-\frac{12}{F(\omega)}.
$$
Let us now extend this result to the case of several islands.
The only problem which can occurs is that flips can merge islands.
However, one big advantage of considering $\overline{\phi}$ instead of $\phi$ is that mergings becomes rather trivial, as we will see.

\begin{lemma}\label{lem:une_hauteur}
If $\omega$ is a dimer tiling whose islands have all the same height, then
$$
\mathbb{E}[\Delta \overline{\phi}(\omega)]\leq-\frac{12}{F(\omega)}.
$$
\end{lemma}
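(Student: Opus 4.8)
The plan is to bootstrap the two single-island results, Lem.~\ref{lem:une_ile_convexe} and Lem.~\ref{lem:une_ile}, and to show that passing to the triconvex hull makes the only new feature --- flips that merge two islands --- contribute nothing. As before, it suffices to establish $\sum_{x\in F(\omega)}\Delta\overline{\phi}_x(\omega)\le -12$. I would first read off the structure of $\overline{\omega}$: since every island of $\omega$ sits at the same height $h$, its hull $\overline{\omega}$ is a triconvex tiling whose islands $\tau_1,\dots,\tau_m$ (with $m\ge 1$) are themselves triconvex and all at height $h$. These islands are pairwise disjoint and separated, so each flip of $\overline{\omega}$ is localized inside a single $\tau_j$; hence $F(\overline{\omega})$ splits as the disjoint union of the $F(\tau_j)$, and applying Lem.~\ref{lem:une_ile_convexe} to each $\tau_j$ (seen as a lone island) gives $\sum_{x\in F(\overline{\omega})}\Delta\phi_x(\overline{\omega})=\sum_{j}\sum_{x\in F(\tau_j)}\Delta\phi_x(\tau_j)\le -12m\le -12$.

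Next I would prove the comparison $\sum_{x\in F(\omega)}\Delta\overline{\phi}_x(\omega)\le\sum_{x\in F(\overline{\omega})}\Delta\phi_x(\overline{\omega})$, which together with the previous bound finishes the lemma. The proof of Lem.~\ref{lem:une_ile} proceeds by the trichotomy $x\in F(\omega)\setminus F(\overline{\omega})$, $x\in F(\overline{\omega})\setminus F(\omega)$, or $x\in F(\omega)\cap F(\overline{\omega})$, and its three sublemmas only inspect the parallel error edges around $x$ and the triconvexity of $\overline{\omega}$ near $x$. These are local arguments, so they carry over verbatim whenever the configuration around $x$ involves a single island. The one case genuinely absent from the single-island setting is a flip of $\omega$ that merges two distinct islands.

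The crux is therefore to dispatch such a merging flip $x$. I would argue that the two merged islands already lie in one triconvex island of $\overline{\omega}$: choosing height-$h$ vertices $p$ and $p'$ of the two islands adjacent to $x$, they are joined by a short segment of slope $0\bmod\frac{2\pi}{3}$, so the triconvexity of $\overline{\omega}$ forces every vertex of this segment to height $h$; in particular $x$ is already filled in $\overline{\omega}$. Denoting by $\omega_a$ the tiling obtained from the flip, this gives the sandwich $\omega\le\omega_a\le\overline{\omega}$, and since $\overline{\omega}$ is triconvex while $\overline{(\cdot)}$ is monotone, the smallest triconvex tiling above $\omega_a$ can only be $\overline{\omega}$ itself, i.e. $\overline{\omega_a}=\overline{\omega}$ and $\Delta\overline{\phi}_x(\omega)=0$. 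Hence a merging flip falls under the first sublemma with zero contribution, which is the precise sense in which mergings become trivial for $\overline{\phi}$. Assembling the comparison inequality with the bound $\sum_{x\in F(\overline{\omega})}\Delta\phi_x(\overline{\omega})\le -12$ then yields $\sum_{x\in F(\omega)}\Delta\overline{\phi}_x(\omega)\le -12$, as wanted.

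I expect the merging step to be the main obstacle: everything else is either a localized reuse of Lem.~\ref{lem:une_ile} or a bookkeeping decomposition of $\overline{\omega}$ into triconvex islands. The delicate point is justifying, purely from triconvexity, that two islands at the same height which a single flip can join must already be joined in the hull, so that no merging flip can ever move $\overline{\omega}$.
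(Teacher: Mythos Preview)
Your argument has a genuine gap, and it is exactly the configuration the paper's proof is built around. You assert that the triconvex islands $\tau_1,\dots,\tau_m$ of $\overline{\omega}$ are ``separated'' so that every flip of $\overline{\omega}$ is localized to a single $\tau_j$, and hence $F(\overline{\omega})=\bigsqcup_j F(\tau_j)$. This fails. Take three stick-shaped triconvex islands (rows of aligned hexagons) pointing in the three directions $\vec v_1,\vec v_2,\vec v_3$ and abutting a common vertex $x$. Around $x$ the six boundary edges of the hexagon alternate error/non-error; the flip at $x$ is therefore admissible in $\overline{\omega}$ (with $\Delta E=0$, $\Delta V=+1$), and it merges all three $\tau_j$. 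So $x\in F(\overline{\omega})$ but $x\notin F(\tau_j)$ for any $j$, and your decomposition of the right-hand sum is invalid.

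The same example also breaks your treatment of merging flips of $\omega$. Since the endpoints of each stick must lie in $\omega$ (they are forced by the hull), the hexagons adjacent to $x$ belong to $\omega$, so $x\in F(\omega)$ as well and the flip merges three islands of $\omega$. Your argument that ``the two merged islands already lie in one triconvex island of $\overline{\omega}$'' relies on finding height-$h$ vertices $p,p'$ joined by a segment of slope $0\bmod\tfrac{2\pi}{3}$ through $x$; but with alternating error edges no two of them are parallel, so no such segment exists, and indeed $x\notin\overline{\omega}$ by construction. Hence performing this flip strictly enlarges the hull and $\Delta\overline{\phi}_x(\omega)\ne 0$. In short, mergings are not always ``trivial for $\overline{\phi}$'': the paper's proof isolates precisely this three-stick configuration, shows it is the only way hull-islands can merge, and verifies the bound directly in that degenerate case. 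That is the missing idea in your proposal.
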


\begin{proof}
If there is no flip which can merge two or more islands of $\overline{\omega}$, then the result just follows from Lem.~\ref{lem:une_ile_convexe} and~\ref{lem:une_ile}.
Otherwise, consider such a flip.
It is thus performed around a vertex, say $x$, which is not in $\overline{\omega}$.
The triconvexity of $\overline{\omega}$ ensures that this is possible only if error and non-error edges alternate on the boundary of the six-triangle hexagon of center $x$.
This flip thus merges exactly three islands of $\overline{\omega}$.
Then, the triconvexity of $\overline{\omega}$ ensures that these islands are necessarily stick-shaped, that is, formed of aligned six-triangle hexagons (Fig.~\ref{fig:une_hauteur}, left).
At least the endpoints of these stick-shaped islands are in $\omega$, and one easily checks that the bound still holds (Fig.~\ref{fig:une_hauteur}, right).
\qed\end{proof}

\begin{figure}
\centering
\includegraphics[scale=0.2]{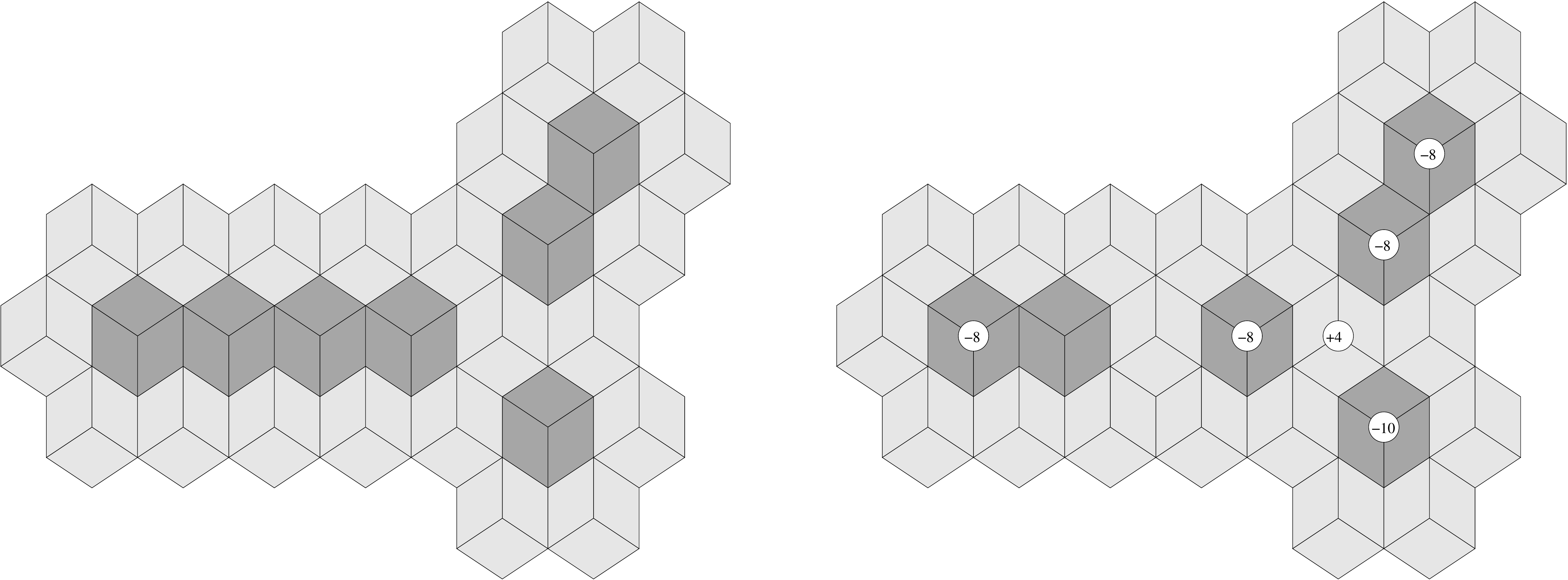}
\caption{A flip which merges islands of $\overline{\omega}$ necessarily merges exactly three islands, which are moreover stick-shaped (left: $\overline{\omega}$).
The flips in $\omega$ which modify $\overline{\omega}$ are such that $\phi$ decreases on expectation (right: $\omega$).}
\label{fig:une_hauteur}
\end{figure}

\noindent We are now in a position to prove:

\begin{theorem}\label{th:convergence}
The worst expected convergence time satisfies:
$$
\widehat{T}(n)=O(n^2\sqrt{n}).
$$
\end{theorem}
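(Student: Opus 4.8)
The plan is to use $\overline{\phi}=4\overline{V}+\overline{E}$ as a potential and feed it into Prop.~\ref{prop:probabilistic_tool1}, taking the cooling process $(\omega_t)$ as the Markov chain. Two ingredients are needed: a uniform negative drift of $\overline{\phi}$ away from frozen tilings, and a bound on the range of $\overline{\phi}$.

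First I would establish the general drift estimate: for every non-frozen $\omega\in\Omega_n$,
$$
\mathbb{E}[\Delta\overline{\phi}(\omega)]\leq-\frac{12}{F(\omega)}.
$$
Lemma~\ref{lem:une_hauteur} already gives this when all islands share a common height, so the remaining task---which I expect to be the main obstacle---is to remove the equal-height hypothesis, allowing islands at several heights, nested islands, and holes. I would do this by reducing to $\overline{\omega}$ through Lemma~\ref{lem:une_ile} (whose argument is local around each flip and so extends harmlessly to several islands), and then analysing the triconvex tiling $\overline{\omega}$ level by level. Because $\overline{\omega}$ is triconvex, the boundaries of its islands at distinct heights cannot be merged by a single flip: only the stick-shaped same-height configurations of Lemma~\ref{lem:une_hauteur} can be merged, so each flip is attached to a single island boundary. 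The topological count ``six more salient than reflex angles'' applies to each such closed boundary independently, so summing the per-flip variations over all islands gives $\sum_{x\in F(\overline{\omega})}\Delta\phi_x(\overline{\omega})\leq-12$, and the presence of several islands only sharpens the bound. Holes are handled symmetrically, using the up--down symmetry of the cube-addition/removal picture.

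Granting the drift estimate, the rest is bookkeeping. The number of performable flips is at most the number of vertices, hence $F(\omega)=O(n)$, and the drift is thus at most $-\varepsilon$ with $\varepsilon=\Theta(1/n)$. For the range, $\overline{E}$ is bounded by the number of edges, hence $O(n)$, while $\overline{V}$ is bounded by the maximal volume of a tiling of the domain; since the domain has diameter $O(\sqrt{n})$, its stepped surfaces have height $O(\sqrt{n})$, giving $\overline{V}=O(n\sqrt{n})$. Thus $\overline{\phi}$ takes values in an interval of length $b-a=O(n\sqrt{n})$. Finally, $\overline{\phi}(\omega)=0$ exactly when $\omega$ is error-free, i.e. frozen, and is bounded below by a positive constant on every other state; after translating $\overline{\phi}$ by a positive constant to meet the hypothesis $[a,b]\subset(0,\infty)$, the first hitting time of $a$ coincides with the convergence time $T$, since error-free tilings are absorbing.

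Applying Prop.~\ref{prop:probabilistic_tool1} then yields
$$
\widehat{T}(n)\leq\frac{b-a}{\varepsilon}=\frac{O(n\sqrt{n})}{\Theta(1/n)}=O(n^2\sqrt{n}),
$$
which is the claimed bound. The delicate point throughout is the drift estimate for tilings with several heights: one must check that passing to the triconvex hull does not destroy the salient/reflex accounting of Lemma~\ref{lem:une_ile_convexe}, and that the only island mergings a flip can create are the benign same-height ones of Lemma~\ref{lem:une_hauteur}; both facts rest on the triconvexity of $\overline{\omega}$.
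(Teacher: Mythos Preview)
Your route and the paper's diverge at the crucial step. The paper does \emph{not} attempt to prove the global drift estimate $\mathbb{E}[\Delta\overline{\phi}(\omega)]\leq -12/F(\omega)$ for arbitrary multi-height tilings. Instead it applies Lemma~\ref{lem:une_hauteur} only to the \emph{highest} level: the flips that become unperformable on the top level because of lower islands are precisely those that would increase $\overline{V}$ (and they cannot decrease $\overline{E}$), so removing them can only help the bound. Since the top level has $\overline{\phi}\leq 5n$ and $F(\omega)\leq n$, Proposition~\ref{prop:probabilistic_tool1} gives that this level disappears in $O(n^2)$ steps; iterating over the $O(\sqrt{n})$ levels yields $O(n^{2.5})$.

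Your argument for the general drift has a genuine gap. You write that ``the topological count `six more salient than reflex angles' applies to each such closed boundary independently, so summing the per-flip variations over all islands gives $\sum_{x\in F(\overline{\omega})}\Delta\phi_x(\overline{\omega})\leq-12$, and the presence of several islands only sharpens the bound.'' But the salient/reflex bookkeeping of Lemma~\ref{lem:une_ile_convexe} converts into a $-12$ contribution only when \emph{every} corner of the island carries a performable flip. For a lower island this fails: a salient corner sitting under a cube of a higher island is not a local maximum of the height function, hence the corresponding volume-decreasing flip is absent from $F(\overline{\omega})$, while the reflex (volume-increasing, $\Delta\phi>0$) flips of that same island may well survive. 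So a lower level can contribute \emph{positively} to the sum, and your ``only sharpens'' claim is exactly what is in question. The paper's Prospects section states this obstruction explicitly: it notes that flips blocked by an upper island are not always the harmless $\overline{\phi}$-increasing ones, and that the authors ``have examples where the converse occurs.'' In short, you have identified the right obstacle but not overcome it; the paper sidesteps it by peeling off levels rather than by proving the stronger drift you assume.
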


\begin{proof}
Let us consider the highest level of the tiling (that is, the highest islands).
The only problem to apply Lem.~\ref{lem:une_hauteur} is that some flips can be unperformeable, due to lower islands.
However, only flips which would increase the volume of $\overline{\omega}$ become unperformeable.
Moreover, these flips could not decrease the energy of $\overline{\omega}$ (we have already seen in the proof of Lem.~\ref{lem:une_ile} that a flip which increases the volume of a triconvex island cannot modify its energy).
The bound of Lem.~\ref{lem:une_hauteur} thus holds {\em a fortiori} for this highest level.
Since both the volume and the energy of the highest level of the tiling are at most $n$, and $F(\omega)\leq n$, one can apply Prop.~\ref{prop:probabilistic_tool1} to $\overline{\phi}$ with $\varepsilon=1/n$: this yields that the highest level of the tiling disappears in time $O(n^2)$.
The claimed bound then follows since the tiling has $O(\sqrt{n})$ levels.
\qed\end{proof}

%%%%%%%%%%%%%%%%%%%%%%%%%%%%%%%%%%%%%%
%%%%%%%%%%%%%%%%%%%%%%%%%%%%%%%%%%%%%%
%%%%%%%%%%%%%%%%%%%%%%%%%%%%%%%%%%%%%%
\section{Prospects}
\label{sec:prospects}

\noindent {\bf Towards a tight bound}\\
Recall that numerical experiments led us to conjecture that the expected convergence time is in $O(n^2)$.
However, Th.~\ref{th:convergence} proves only a $O(n^2\sqrt{n})$ upper bound.
This means that we lost a factor $\sqrt{n}$ in the convergence time analysis.
We see at least two possibly large approximation in our proof, informally described below.\\

First, we have considered triconvex hulls of islands instead of real islands.
The main advantage is that we thus need to consider neither holes (hence we can suppose that an island has a simple boundary, as in Lem.~\ref{lem:une_ile_convexe}) not complicated merging of islands (the merging analysed in Lem.~\ref{lem:une_hauteur} are very simple).
The disadvantage is that, by considering triconvex hulls, we ignore the role of flips performed {\em inside} hulls (they just appear in the bound in the $F(\omega)$ term).
However, numerical experiments suggest that the cooling of a tiling is not significantly faster than the cooling of its triconvex hull.
Hence, it is not clear that we lost that much in considering triconvex hulls.\\

Second, in the proof of Th.~\ref{th:convergence}, we applied Lem.~\ref{lem:une_hauteur} only the highest level of the tiling.
This way, we make as if the $F(\omega)$ factor in the bound of Lem.~\ref{lem:une_hauteur} would count only flips on this level.
But, of course, $F(\omega)$ counts all the flips of the tiling.
In particular, one can reasonably expect that flips performed on lower levels act similarly, {\em i.e.}, that one could multiply the bound of Lem.~\ref{lem:une_hauteur} by the number of levels.
Since they are around $\sqrt{n}$ levels, this would greatly improve the bound.
However, the problem is that a flip which could be performed on some lower island if they were not upped islands, can not be any more performeable.
This would not be a problem if such flips could only increase $\overline{\phi}$, but we have examples where the converse occurs.
In other words, a lower island is forced to growth because of a higher island.
We can maybe get round this problem, since the higher island will rapidly shrink and no longer force the lower one to growth, but analyzing such a phenomenon seems to be very tedious\ldots\\

Last, let us point out that the simple theoretical upper bound we provided is $\Omega(n\sqrt{n})$, that is, we also need to gain a factor $\sqrt{n}$.\\

\noindent {\bf Average expected convergence time}\\
In this paper, we only consider the worst expected convergence time, $\widehat{T}(n)$.
However, it is also natural (even more, according to the physical motivations exposed in the introduction), to consider the average expected convergence time:
$$
\overline{T}(n):=\frac{1}{\#\Omega_n}\sum_{\omega\in\Omega_n}\mathbb{E}(T~|~\omega_0=\omega).
$$
In this case, numerical experiments (Fig.~\ref{fig:simulations2}) led us to conjecture:
$$
\overline{T}(n)=\Theta(n\sqrt{n}).
$$

\begin{figure}
\includegraphics[width=0.48\textwidth]{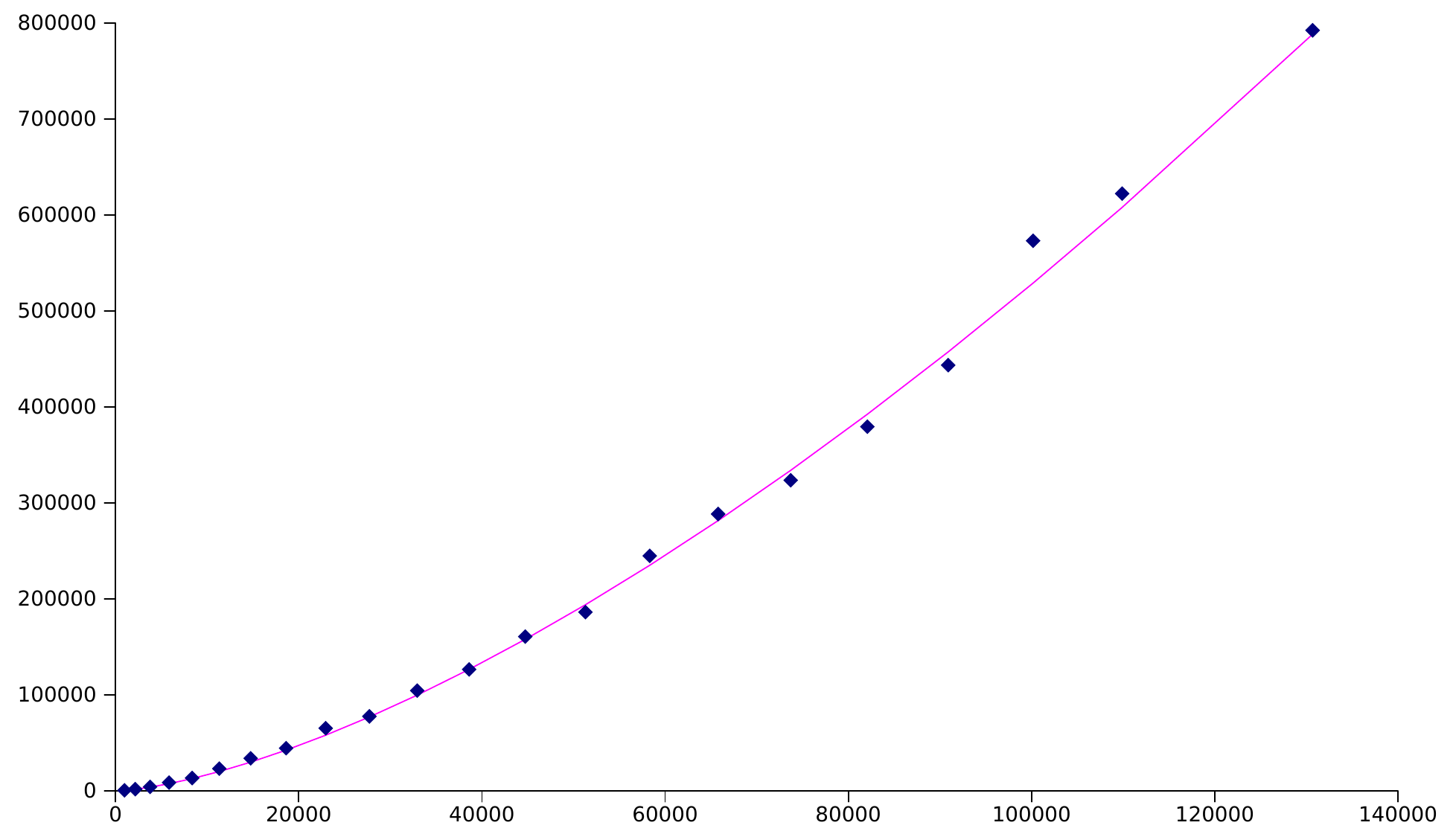}
\hfill
\includegraphics[width=0.48\textwidth]{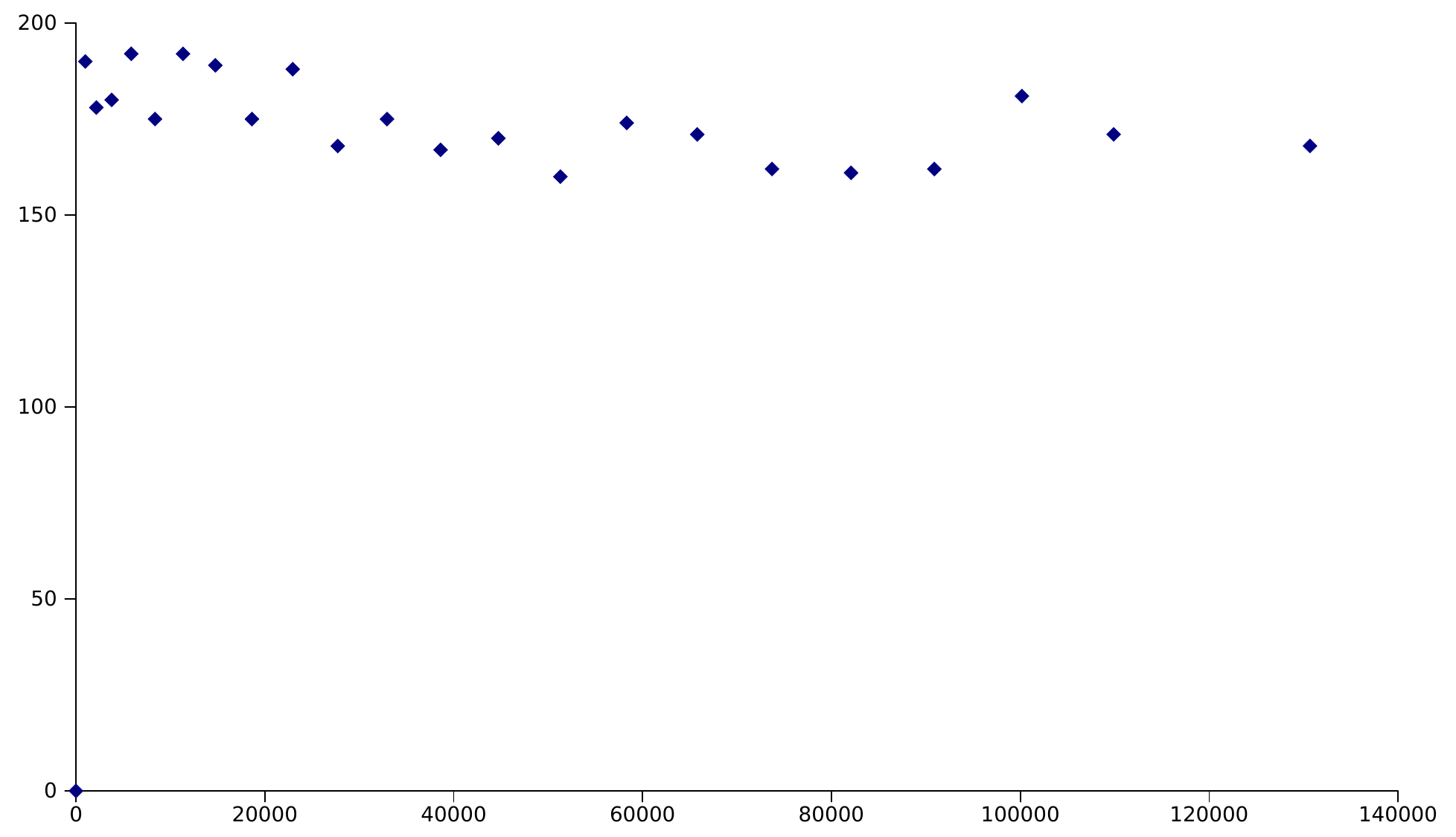}
\caption{Analog of Fig.~\ref{fig:simulations} for the average expected convergence time. Here, the drawn curve (left) has equation $cn\sqrt{n}$, $c=1.669.10^{-2}$.
We thus conjecture $\overline{T}(n)=\Theta(n\sqrt{n})$.
}
\label{fig:simulations2}
\end{figure}

In order to bound the average expected convergence time by similar technics as for the worst expected convergence time, we first need to compute the average values of some functions, {\em e.g.}, the volume $V$, the energy $E$ or the number of levels $H$.
This is what we did in the one-dimensional case in \cite{analco}.
However, this two-dimensional case is much harder, in the spirits of works by Propp \& al \cite{propp}.
Let us just mention, to conclude, that numerical experiments indicate that both the average volume and the average energy of a tiling are linear (in the number of tiles), while the number of levels is logarithmic!\\

\noindent {\bf Acknowledgments}. We would like to thank Olivier Bodini (LIP6, Paris), \'Eric R\'emila (LIP, Lyon) and Mathieu Sablik (LATP, Marseille) for useful discussions.

%%%%%%%%%%%%%%%%%%%%%%%%%%%%%%%%%%%%%%
%%%%%%%%%%%%%%%%%%%%%%%%%%%%%%%%%%%%%%
%%%%%%%%%%%%%%%%%%%%%%%%%%%%%%%%%%%%%%


\begin{thebibliography}{16} %nb refs / ordre de citation
\bibitem{aperiodic} O.~Bodini, Th.~Fernique, D.~Regnault, \emph{Crystallization by stochastic flips}, to appear in Journal of Physics: Conference Series.
\bibitem{analco} O.~Bodini, Th.~Fernique, D.~Regnault, {\em Stochastic flips on two-letter words}, Analytic Algorithmics \& Combinatorics (2010), p.~48--55.
\bibitem{bfr} O.~Bodini, Th.~Fernique, \'E.~R\'emila, {\em A Characterization of flip-accessibility for rhombus tilings of the whole plane}, Inf. Comput. {\bf 206} (2008), p.~1065--1073.
\bibitem{fmst} N.~Fat\`es, M.~Morvan, N.~Schabanel, \'E.~Thierry, \emph{Fully asynchronous behavior of double-quiescent elementary cellular automata}, Theor. Comput. Sci. {\bf 362} (2006), pp.~1--16.
\bibitem{janot} C.~Janot, {\em Quasicrystals: The State of the art}, Oxford University Press, 1992.
\bibitem{propp} H.~Cohn, M.~Larsen, J.~Propp, {\em The Shape of a typical boxed plane partition}, New-York J. Math. {\bf 4} (1998), p.~137--166.
\bibitem{thurston} W.~P.~Thurston, {\em Conway tilings group}, Amer. Math. Month. {\bf 97} (1990), p.~757-–773.
\end{thebibliography}
\end{document}